\theoremstyle{plain}
\newtheorem{thm}{Theorem}%[section]
\newtheorem{lm}{Lemma}
\newtheorem{pr}{Proposition}
\theoremstyle{definition}
\newcommand*{\wt}{\widetilde}
\newcommand{\ad}{\mathop{\mathrm{ad}}\nolimits}
\newcommand*{\lla}{\longleftarrow}
\newcommand{\CC}{\mathbb{C}}
\newcommand{\R}{\mathbb{R}}
\newcommand{\Z}{\mathbb{Z}}
\newcommand{\N}{\mathbb{N}}
\newcommand*{\cO}{\mathcal{O}}
\newcommand*{\fa}{\mathfrak{a}}
\newcommand*{\ff}{\mathfrak{f}}
\newcommand*{\fg}{\mathfrak{g}}
\newcommand*{\rT}{\mathrm T}
\renewcommand{\le}{\leqslant}
\renewcommand{\ge}{\geqslant}
\let \al         =\alpha
\let \be         =\beta
\let \ep         =\varepsilon      
\let \la         =\lambda
\let \si         =\sigma
\let \Om        =\Omega
\let \phi         =\varphi
\begin{document}
\title{Sheaves of noncommutative smooth and holomorphic functions
associated with the non-Abelian two-dimensional Lie algebra}

\author{O.\,Yu.~Aristov}
\email{aristovoyu@inbox.ru}
\thanks{This work was supported by the Russian Foundation for Basic Research under grant 19-01-00447.}

\begin{abstract}
Dosi and, quite recently, the author showed that, on the character space of a nilpotent Lie
algebra, there exists a sheaf of Fr\'echet--Arens--Michael algebras (of noncommutative holomorphic
functions in the complex case and of noncommutative smooth functions in the real case). We construct similar sheaves (both versions, holomorphic and smooth) on a special space of representations
for the Lie algebra of the group of affine transformations of the real line (which is the simplest
nonnilpotent solvable Lie algebra).

\end{abstract}

\maketitle

\markright{Sheaves of noncommutative functions}

With any triangular Lie algebra  $\fg$ over $\R$  a Fr\'echet--Arens--Michael algebra is associated, whose
elements can be regarded as noncommutative~$C^\infty$  functions (see the author's paper~\cite{ArOld}). In\cite{ArOld}, the author considered a local version of this algebra in the case where~$\fg$  is nilpotent (by analogy with
algebras of noncommutative holomorphic functions defined by Dosi in  \cite{Do10B}  and  \cite{Do10A}).  Namely, for each
open subset $V$ of the character space of~$\fg$, the multiplication in the universal enveloping algebra $U(\fg)$ can be extended to a Fr\'eche--Arens--Michael algebra of polynomial growth (it is denoted by $C^\infty_\fg(V)$), and there arises a sheaf of algebras of this type (which are noncommutative if $\fg$  is non-Abelian) \cite[Theorems~5.1 and~5.5]{ArOld}.
 In a similar way, a sheaf of noncommutative holomorphic functions for a nilpotent
Lie algebra $\fg$ over $\CC$  is constructed (see \cite[Sec.\,5.1, pp.\,119--120]{Do10A},  where certain constraints on $\fg$  are
imposed, and~\cite[Sec.\,6]{ArOld},  where the general case is considered).

However, the definition in the form given in~\cite{ArOld} cannot be applied to nonnilpotent triangular Lie
algebras. It is easy to check that, even for the simplest nonnilpotent triangular Lie algebra  $\fa\ff_1$ (of the
group of affine transformations of the real line), the multiplication in $U(\fa\ff_1)$  does not generally extend
to a continuous operation on the required space  \cite[Sec.\,5.2]{ArOld}.  The same is true of noncommutative
holomorphic functions.

In this paper, we propose a method for overcoming this difficulty in the case of $\fa\ff_1$. We consider a
topological space consisting of special indecomposable representations of~$\fa\ff_1$   (the set of characters is
its proper subset) and construct a Fr\'echet--Arens--Michael algebra $C^\infty_{\fa\ff_1}(V)$  of polynomial growth for
each open set $V$ in this space. The proof that it belongs to the required class uses curious algebras of
triangular matrices; see Sec.\,\ref{s:usege}. As a result, we obtain a sheaf (Theorem~\ref{af1PGSh}),  which can be called a sheaf
of noncommutative smooth functions by analogy with the Abelian case. In a similar way, we construct
a sheaf $\mathfrak{F}_\fg(-)$  of noncommutative holomorphic functions for the complexification of the algebra~$\fa\ff_1$ (Theorem~\ref{Faf1Vgen}). The results obtained for this simplest algebra give hope for the existence of sheaves for
any triangular Lie algebras.

\subsection*{On the used classes of algebras}
Polynomial growth is the most important property of both the globally defined algebra $C^\infty_\fg$, where~$\fg$ is a triangular Lie $\R$-algebra, and the locally defined algebras $C^\infty_\fg(V)$, where~$\fg$ is nilpotent. In particular,
$C^\infty_\fg$ has a universal property in the class of Banach algebras of polynomial growth \cite[Theorem 4.4]{ArOld}. This is also true in the case $\fg=\fa\ff_1$.  Our first objective is to specify locally defined $C^\infty$ algebras for this case.
It is natural to expect that they must be of polynomial growth, and this is indeed so (Theorem~\ref{af1CgU}).

Recall that a locally convex algebra (here and in what follows, all algebras are assumed to be
unital) over $\R$ or $\CC$  is called an  \emph{Arens--Michael algebra}  if its topology is determined by a family of
submultiplicative prenorms (i.e., prenorms for which  $\|ab\|\le \|a\|\,\|b\|$).  In other words, it is a projective
limit of Banach algebras (see \cite{X2}).
 Following \cite[Definition~2.6]{ArOld},  we say that an Arens--Michael $\R$-algebra~$B$ has \emph{polynomial growth} if, for any $b\in B$  and any continuous submultiplicative prenorm
$\|\cdot \|$ on~$B$   there exists a $K>0$ and an $\al\ge0$ such that
$$
\|e^{isb}\|\le K (1+|s|)^{\al} \qquad \text{for all $s\in
\R$.}
$$
(here $\|\cdot \|$ is extended to the complexification). In particular, such are the Banach algebras $C^n[a,b]$ of $n$-times continuously differentiable functions for $n\in\Z_+$,  the Arens--Michael algebras $C^\infty[a,b]$ of infinitely differentiable functions, and their generalizations. The basic properties of algebras of
polynomial growth can be found in~\cite{ArOld} and~\cite{ArNew}.

The algebras of noncommutative $C^\infty$  functions defined in \cite{ArOld} were introduced by analogy with the
algebras of noncommutative holomorphic functions considered previously by Dosi in \cite{Do10B} and \cite{Do10A}  for
nilpotent Lie $\CC$-algebras $\fg$,  namely, with the globally defined algebra $\mathfrak{F}_\fg$ and the locally defined algebras
 $\mathfrak{F}_\fg(V)$. In~\cite{ArOld}, the definition of $\mathfrak{F}_\fg$  was generalized to arbitrary finite-dimensional solvable Lie  $\CC$-algebras
(they are automatically triangular). Note that, in the case $\fg=\fa\ff_1$,  the corresponding algebra $\mathfrak{F}_{\fa\ff_1}$  has
the universal property in the class of all Banach algebras, i.e., is the Arens--Michael envelope; however,
this is a mere coincidence (this does not hold for arbitrary $\fg$).

In \cite[Remark 6.8]{ArOld},  it was mentioned that the algebra $\mathfrak{F}_\fg(V)$  for a nilpotent $\fg$ and the algebra $\mathfrak{F}_\fg$ in the
general case are projective limits of Banach algebras satisfying a polynomial identity. Moreover, we can
restrict the set of polynomial identities, because, as essentially shown in  \cite{ArOld},  these algebras are locally
solvable Arens--Michael algebras. (We say that an associative algebra is  \emph{solvable} if it is solvable as a
Lie algebra, and we say that an Arens--Michael algebra is  \emph{locally solvable} if it is a projective limit of
Banach solvable algebras.) Note also that any Banach $\R$-algebra of polynomial growth is solvable.

As in the case of algebras of noncommutative smooth functions, it is natural to expect that the
properties of algebras $\mathfrak{F}_{\fa\ff_1}(V)$ defined in~\eqref{Fiaf1V}   are similar to those in the nilpotent case. Indeed, the
 $\mathfrak{F}_{\fa\ff_1}(V)$  are locally solvable Arens--Michael algebras (see Theorems~\ref{FgV} and~\ref{Faf1Vgen}).

\section{Algebras of triangular matrices}
\label{s:usege}
In this section, we define certain algebras of triangular matrices whose entries belong to the spaces of
infinitely smooth functions and of holomorphic functions (with domains depending on the position in the
matrix) and show that they are Arens-Michael algebras of polynomial growth in the case of the field~$\R$ and locally solvable in the case of~$\CC$.
These algebras play an auxiliary role and are used in the proofs of
the main theorems of Sec.\,\ref{s:shnc},  but they may also be of independent interest.

\subsection*{The $C^\infty$-version: polynomial growth}

Let $\rT_p$ denote the algebra of upper triangular (including diagonal) real matrices of order $p$, where $p\in\N$. In
\cite[Theorem~2.12]{ArOld}, it was shown that, for any $p,m\in \N$ and any open set~$W$ in~$\R^m$, the
algebra $C^\infty(W,\rT_{p})$ of matrix-valued functions is of polynomial growth. This algebra can be identified
with the algebra  $\rT_{p}(C^\infty(W))$ of triangular matrices with entries in $C^\infty(W)$. We need the following
generalization.

We consider tuples
$$
\underline{K}=\{K_{ij}\!:\, 1\le i\le j \le p\}
$$
of compact subsets of~$\R^m$  such that the interior of each $K_{ij}$  is dense. We denote by $\mathcal{C}^m_p$ the family
of all such sets satisfying the condition  $K_{ik}\subset K_{ij}\cap K_{jk}$ for all admissible indices. Given $n\in\Z_+$, by
$\rT_{p}(C^n(\underline{K}))$   we denote the vector space of upper triangular matrices of the form
$$
\begin{pmatrix}
f_{11}&f_{12} &\cdots&&f_{1p} \\
 &f_{22} && &\\
 & & \ddots&&\vdots\\
  & & &f_{p-1,p-1}&f_{p-1,p}\\
   & & &&f_{pp}
\end{pmatrix},
$$
in which the functions $f_{ij}$ belong to different spaces, namely,  $f_{ij}\in C^n(K_{ij})$. The set $K_{ij}$  is allowed to
be empty; in this case, we set $C^n(K_{ij})=0$.  The space $\rT_{p}(C^n(\underline{K}))$ treated as a Cartesian product of
function spaces is a Banach space.

Let $\underline{K}\in\mathcal{C}^m_p$. To simplify notation, for any $ f\in C^n(K_{ij})$ and $g\in C^n(K_{jk})$, we denote the product
of the restrictions of these functions to   $K_{ik}$ by $fg$.  Obviously,
$fg\in C^n(K_{ik})$. We define multiplication
in  $\rT_{p}(C^n(\underline{K}))$  by the standard formula: the product of matrices $f=(f_{ij})$ and $g=(g_{jk})$  is the matrix
with entries  $h_{ik}=\sum_j f_{ij}g_{jk}$. The associativity of multiplication is obvious.

\begin{pr}\label{Tseqco}
If $n\in\Z_+$, $p,m\in\N$ and $\underline{K}\in\mathcal{C}^m_p$, then $\rT_{p}(C^n(\underline{K}))$ is a Banach $\R$-algebra of
polynomial growth.
\end{pr}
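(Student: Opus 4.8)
The plan is to treat $\rT_{p}(C^n(\underline{K}))$ as a triangular matrix algebra and to separate the two assertions: being a Banach algebra, and having polynomial growth. For the first, the underlying space is a finite Cartesian product of the Banach spaces $C^n(K_{ij})$, hence complete, and the nesting condition $K_{ik}\subset K_{ij}\cap K_{jk}$ is precisely what makes the matrix product land in the correct spaces and be associative (as already noted). To obtain an \emph{honestly submultiplicative} norm I would equip each $C^n(K_{ij})$ with the weighted norm $\|f\|=\sum_{|\al|\le n}\frac{1}{\al!}\|\partial^\al f\|_\infty$, for which pointwise multiplication is submultiplicative by the Leibniz rule (the binomial coefficients cancel against the factorials), and then use that the restriction $C^n(K_{ij})\to C^n(K_{ik})$ is norm-decreasing when $K_{ik}\subset K_{ij}$. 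Assembling these into a row-sum matrix norm $\|f\|=\max_i\sum_k\|f_{ik}\|$ yields submultiplicativity of the whole algebra by a routine rearrangement.

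For polynomial growth I would first reduce to the single Banach norm. Since $\rT_{p}(C^n(\underline{K}))$ is a Banach algebra, any continuous submultiplicative prenorm $\|\cdot\|$ is dominated by a constant multiple of the norm $\|\cdot\|_B$; hence $\|e^{isb}\|\le C\|e^{isb}\|_B$, and it suffices to bound $\|e^{isb}\|_B$ polynomially in $s$ (working throughout in the complexification). Write $b=d+u$, where $d$ is the diagonal part, lying in the commutative subalgebra $\prod_i C^n(K_{ii})$, and $u$ is the strictly upper triangular part, which is nilpotent: any product of $p$ strictly upper triangular matrices vanishes.

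The two remaining ingredients are a scalar estimate and a Duhamel expansion. For a real-valued $f\in C^n(K)$ one has, by the Leibniz and chain rules (Fa\`a di Bruno's formula), that $\partial^\al e^{isf}=e^{isf}P_\al$ with $P_\al$ a polynomial of degree $\le|\al|\le n$ in $is$ whose coefficients are products of derivatives of $f$; since $|e^{isf}|=1$ pointwise, this gives $\|e^{isf}\|\le K(1+|s|)^n$, so every diagonal element has polynomial growth with exponent $n$. To pass to $b$, I would use the interaction-picture (Dyson--Duhamel) formula
$$
e^{is(d+u)}=e^{isd}\sum_{r=0}^{p-1} i^r\!\!\int\limits_{0\le t_1\le\cdots\le t_r\le s}\!\!\wt u(t_r)\cdots\wt u(t_1)\,dt_1\cdots dt_r,\qquad \wt u(t)=e^{-itd}u\,e^{itd},
$$
the sum terminating at $r=p-1$ because each $\wt u(t)$ is again strictly upper triangular. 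Estimating termwise, $\|\wt u(t)\|\le\|e^{-itd}\|\,\|u\|\,\|e^{itd}\|$ is polynomially bounded in $t$ (hence in $s$ over the simplex) by the diagonal estimate, the simplex has volume $\le|s|^r/r!$, and $\|e^{isd}\|$ is polynomially bounded; multiplying the finitely many bounds gives $\|e^{isb}\|_B\le K'(1+|s|)^{\alpha'}$ for a suitable exponent $\alpha'$.

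The main obstacle is exactly the non-commutativity of $d$ and $u$: one cannot simply factor $e^{isb}=e^{isd}e^{isu}$. The Duhamel expansion circumvents this, and the decisive point is that nilpotency truncates the series to finitely many terms — an untruncated series $\sum_r \frac{|s|^r}{r!}(\cdots)^r$ would grow exponentially, whereas the finite sum stays polynomial. The varying domains $K_{ij}$ and the finite order $n$ (as opposed to the $C^\infty$, single-domain situation of \cite[Theorem~2.12]{ArOld}) affect only the bookkeeping: the nesting condition keeps all restrictions norm-decreasing, and the scalar exponent $n$ is uniform across the diagonal.
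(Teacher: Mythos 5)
Your proof is correct, and at the structural level it rests on the same decomposition as the paper's: $\rT_{p}(C^n(\underline{K}))$ is a split nilpotent extension of the diagonal subalgebra $C^n(K_{11})\times\cdots\times C^n(K_{pp})$ by the ideal of strictly upper triangular matrices, and polynomial growth is pulled back from the diagonal. The difference is in how that inheritance is established. The paper is entirely modular and three lines long: it quotes from \cite{ArOld} that each $C^n(K_{jj})$ has polynomial growth (Proposition~2.13 there), that the class is stable under finite products and closed subalgebras (Proposition~2.11), and that it is stable under split nilpotent extensions (Theorem~2.14). You instead open these black boxes: your Fa\`a di Bruno/Leibniz estimate $\|e^{isf}\|\le K(1+|s|)^{n}$ for real-valued $f$ is in substance the proof of the cited Proposition~2.13, and your Dyson--Duhamel expansion, truncated at $r=p-1$ because each $\wt u(t)=e^{-itd}ue^{itd}$ is again strictly upper triangular, is precisely the mechanism by which a split nilpotent extension preserves polynomial growth. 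What your version buys is self-containedness, an explicit exponent, and a transparent explanation of why nilpotency (rather than mere triangularity) is the decisive point --- the untruncated series would only give an exponential bound. What it costs is length and two routine verifications you assert rather than carry out: that the interaction-picture formula holds in a Banach algebra (uniqueness for the linear ODE $U'=i(d+u)U$), and that the varying domains cause no trouble beyond the norm-decreasing restrictions $C^n(K_{ij})\to C^n(K_{ik})$ guaranteed by $K_{ik}\subset K_{ij}\cap K_{jk}$; both are indeed standard. Your preliminary reduction of an arbitrary continuous submultiplicative prenorm to the Banach norm is also correct and is needed in either approach.
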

\begin{proof}
 It is easy to see that multiplication in $\rT_{p}(C^n(\underline{K}))$   is continuous; therefore, this is a Banach
algebra. Consider the following extension of Banach algebras:
$$
0\lla C^n(K_{11})\times \cdots \times  C^n(K_{pp}) \lla \rT_{p}(C^n(\underline{K})) \lla I\lla 0,
$$
where $I$ is the ideal of matrices with zero diagonal. Obviously, it is split and nilpotent. Since the
class of Banach $\R$-algebras of polynomial growth is stable with respect to the passage to closed
subalgebras and finite Cartesian products \cite[Proposition~2.11]{ArOld} and all $C^n(K_{jj})$ are of polynomial
growth  \cite[Proposition~2.13]{ArOld}, it follows that $C^n(K_{11})\times \cdots \times  C^n(K_{pp})$ is of polynomial growth as well.
Thus, $\rT_{p}(C(\underline{K})) $ is a split nilpotent extension of a Banach $\R$-algebra of polynomial growth; according
to  \cite[Theorem~2.14]{ArOld}, it also has this property.
\end{proof}

Now let $\underline{W}=\{W_{ij}\!:\, 1\le i\le j \le p\}$  be a tuple of open subsets~$\R^m$ such that $W_{ik}\subset W_{ij}\cap W_{jk}$ for all admissible indices. Consider the vector space $\rT_{p}(C^\infty(\underline{W}))$  of upper triangular matrices $(h_{ij})$, where $h_{ij}\in C^\infty(W_{ij})$ for all $i\le j$; we endow it with the same multiplication as above and the product topology.

\begin{thm}\label{corTseqco}
Let $p,m\in\N$, and let $\underline{W}=\{W_{ij}\!:\, 1\le i\le j \le p\}$  be a tuple of open subsets of~$\R^m$ such that $W_{ik}\subset W_{ij}\cap W_{jk}$  for all admissible indices. Then $\rT_{p}(C^\infty(\underline{W}))$ is the projective limit of
Banach algebras of the form  $\rT_{p}(C^n(\underline{K}))$, where $\underline{K}\in\mathcal{C}^m_p$ and $\underline{K}\subset \underline{W}$ (that is, $K_{ij}\subset W_{ij}$ for all
admissible indices) and, therefore, a Fr\'echet--Arens--Michael  $\R$-algebra of polynomial growth.
\end{thm}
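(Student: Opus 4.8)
The plan is to exhibit $\rT_p(C^\infty(\underline W))$ as a projective limit of the Banach algebras $\rT_p(C^n(\underline K))$ and then invoke the fact that a projective limit of Banach algebras of polynomial growth is a Fréchet--Arens--Michael algebra of polynomial growth, with the Banach pieces supplied by Proposition~\ref{Tseqco}. The whole argument reduces to the single-entry case: the classical description of $C^\infty(W)$ as a projective limit. Recall that for an open $W\subset\R^m$ one chooses an exhaustion $K^{(1)}\subset K^{(2)}\subset\cdots$ of $W$ by compact sets with dense interior (e.g.\ closures of a nested sequence of relatively compact open sets with $\bigcup W^{(\nu)}=W$), and then $C^\infty(W)=\varprojlim_{\nu,n} C^n(K^{(\nu)})$, the topology of uniform convergence of all derivatives on compacta being exactly the projective-limit topology from the seminorms attached to these data. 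I would simply run this construction simultaneously in every matrix position.

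Concretely, first I would fix, for each pair $i\le j$, an exhaustion $K^{(\nu)}_{ij}\nearrow W_{ij}$ by compacta with dense interior, chosen \emph{compatibly} so that the inclusion condition $K^{(\nu)}_{ik}\subset K^{(\nu)}_{ij}\cap K^{(\nu)}_{jk}$ holds for every $\nu$ and all admissible $i,j,k$; that is, each tuple $\underline K^{(\nu)}=\{K^{(\nu)}_{ij}\}$ lies in $\mathcal C^m_p$. This compatibility is the one point requiring care, and it is the step I expect to be the main (if modest) obstacle: the naive independent choices need not satisfy the nesting condition $K_{ik}\subset K_{ij}\cap K_{jk}$. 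The fix is to build the exhaustions by downward induction on the ``length'' $j-i$ of the index pair. One first exhausts the diagonal sets $W_{ii}$ freely; then, having chosen the $K^{(\nu)}_{i'j'}$ for all pairs shorter than $j-i$, one intersects a candidate exhaustion of $W_{ij}$ with the already-constructed compacta $K^{(\nu)}_{ik}\cap K^{(\nu)}_{kj}$ for every intermediate $k$. Since each $W_{ik}\cap W_{kj}\supset W_{ij}$ by the hypothesis on $\underline W$, the intersection still exhausts $W_{ij}$ and still has dense interior (a finite intersection of compacta with dense interior in a common open set retains this property after taking the closure of the interior), so the condition defining $\mathcal C^m_p$ is met at every stage.

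With such a compatible system in hand, for each $\nu\in\N$ and $n\in\Z_+$ the tuple $\underline K^{(\nu)}\in\mathcal C^m_p$ satisfies $\underline K^{(\nu)}\subset\underline W$, and Proposition~\ref{Tseqco} makes $\rT_p(C^n(\underline K^{(\nu)}))$ a Banach $\R$-algebra of polynomial growth. The restriction maps $C^\infty(W_{ij})\to C^n(K^{(\nu)}_{ij})$ assemble entrywise into algebra homomorphisms $\rT_p(C^\infty(\underline W))\to\rT_p(C^n(\underline K^{(\nu)}))$; these are multiplicative precisely because restriction commutes with the entrywise products $f_{ij}g_{jk}|_{K_{ik}}$ defining the matrix multiplication. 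I would then check that the induced map into the projective limit is a topological isomorphism. Injectivity and the fact that the topology agrees are both immediate from the single-entry statement $C^\infty(W_{ij})=\varprojlim_{\nu,n} C^n(K^{(\nu)}_{ij})$ applied coordinatewise, because both the vector-space structure and the product topology on $\rT_p(C^\infty(\underline W))$ are the coordinatewise ones. Surjectivity onto the limit likewise follows coordinatewise: a compatible family of matrices glues to a matrix each of whose entries glues in the corresponding $C^\infty(W_{ij})$.

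Finally, the conclusion is formal. A projective limit of Banach algebras is by definition an Arens--Michael algebra, and since the index set $\{(\nu,n)\}$ is countable and directed the limit is a Fréchet algebra. Polynomial growth passes to projective limits: every continuous submultiplicative prenorm on the limit is dominated by one pulled back from some Banach quotient $\rT_p(C^n(\underline K^{(\nu)}))$, on which the polynomial-growth estimate $\|e^{isb}\|\le K(1+|s|)^\alpha$ holds by Proposition~\ref{Tseqco}; transporting this bound back gives the same estimate on the limit. Hence $\rT_p(C^\infty(\underline W))$ is a Fréchet--Arens--Michael $\R$-algebra of polynomial growth, as claimed.
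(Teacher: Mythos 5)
Your proposal is correct, and its overall skeleton (reduce everything to Proposition~\ref{Tseqco} plus a statement that compatible compact tuples are cofinal, then pass polynomial growth through the projective limit coordinatewise) matches the paper's. Where you genuinely diverge is in the combinatorial core. The paper takes an \emph{arbitrary} compact tuple $\underline{K}\subset\underline{W}$ and dominates it by a member of $\mathcal{C}^m_p$: around each point it places a single small ball $S$ used simultaneously as $K_{ik}$ for all pairs $i'\le i\le k\le k'$ (such ``constant-ball'' tuples trivially satisfy $K_{ik}\subset K_{ij}\cap K_{jk}$), then covers $\coprod_{i\le j}K_{ij}$ by finitely many of these and takes the union, using the observation that $\mathcal{C}^m_p\cap\{\underline{K}\subset\underline{W}\}$ is closed under finite unions and hence a net. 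You instead build one explicit countable compatible exhaustion $\underline{K}^{(\nu)}$ by induction on $j-i$, intersecting a candidate exhaustion of $W_{ij}$ with the already-chosen shorter-pair compacta. Both work: the paper's version delivers the theorem exactly as stated (the limit over \emph{all} of $\mathcal{C}^m_p$ inside $\underline{W}$) with no inductive bookkeeping, while yours produces a concrete countable cofinal chain, which makes the Fr\'echet property and the identification $C^\infty(W_{ij})=\varprojlim_{\nu,n}C^n(K^{(\nu)}_{ij})$ immediate; since every compact tuple in $\underline{W}$ is eventually absorbed by some $\underline{K}^{(\nu)}$, your chain is cofinal in the full system, so the two limits agree. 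Two small points in your construction deserve one more sentence each: the intersection $L^{(\nu)}_{ij}\cap\bigcap_k\bigl(K^{(\nu)}_{ik}\cap K^{(\nu)}_{kj}\bigr)$ can fail to have dense interior, so you must replace it by the closure of its interior (which only shrinks it, preserving the nesting condition), and you should then check the exhaustion property survives this replacement --- it does, by first absorbing a slightly fattened compact neighborhood $M_\delta$ of a given compact $M\subset W_{ij}$, so that $M$ lands in the interior. With those details filled in, your argument is a complete and valid alternative proof.
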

\begin{proof}
By Proposition~\ref{Tseqco},  it suffices to prove the first assertion, i.e., that $\rT_{p}(C^\infty(\underline{W}))$ is the projective
limit of Banach algebras of the form $\rT_{p}(C^n(\underline{K}))$, where $\underline{K}\in\mathcal{C}^m_p$ and $\underline{K}\subset \underline{W}$.

Note that the sets $\underline{K}$ of compact subsets satisfying the conditions $\underline{K}\in\mathcal{C}^m_p$ and $\underline{K}\subset \underline{W}$ form a net
with respect to inclusion. Indeed, the first condition is preserved because
$$K_{ik}\cup K'_{ik} \subset (K_{ij}\cup K'_{ij})\cap (K_{jk}\cup K'_{jk})$$ if $K_{ik}\subset K_{ij}\cap K_{jk}$ and $K'_{ik}\subset K'_{ij}\cap K'_{jk}$, and the preservation of the second is obvious.

If $\underline{K}\subset \underline{K}'$ and $n'\ge n$, we have a continuous homomorphism
$$
\rT_{p}(C^{n'}(\underline{K}'))\to\rT_{p}(C^n(\underline{K})).
$$
There arises the projective system of Banach algebras
$$
(\rT_{p}(C^n(\underline{K}))\!: \underline{K}\in\mathcal{C}^m_p,\,\underline{K}\subset \underline{W},\, n\in\Z_+).
$$

On the other hand,  $\rT_{p}(C^\infty(\underline{W}))$ is the projective limit of spaces of the form
$\rT_{p}(C^n(\underline{K}))$, where $\underline{K}\subset \underline{W}$, in the category of Banach spaces. Thus, to complete the proof, it suffices, given any $\underline{K}\subset \underline{W}$, to find a $\underline{K}'\in\mathcal{C}^m_p$ for which $\underline{K}\subset \underline{K}'\subset\underline{W}$.

First, note that, for fixed  $i'$ and $k'$ satisfying the inequality $i'\le k'$ and any $x\in W_{i'k'}$, there exists
a $\underline{K}\subset \underline{W}$  such that $x\in K_{i'k'}$ and  $\underline{K}\in\mathcal{C}^m_p$.
Indeed, we have $x\in W_{i'j}\cap W_{jk'}$  for all $j$ satisfying the
condition $i'\le j\le k'$. Hence there exists a ball $S$ of nonzero radius centered at $x$ which is contained in
all  $W_{i'j}$ and $W_{jk'}$ for these $j$. We set  $K_{ik}\!:=S$ if  $i'\le i\le k\le k'$ and $K_{ik}\!:=\emptyset$ otherwise.
 It is easy to
check that the required condition $K_{ik}\subset K_{ij}\cap K_{jk}$ holds for all admissible indices, i.e., $\underline{K}\in\mathcal{C}^m_p$.

Consider the topological space $\coprod_{i\le j} W_{ij}$ (disjoint union) and its compact subset $\coprod_{i\le j} K_{ij}$.
According to what was proved above, there exists a family $(\underline{K}_\al)$ such that $\bigcup_\al\underline{K}_\al=\underline{W}$ and $\underline{K}_\al\in\mathcal{C}^m_p$  for
each~$\al$. Since $\coprod_{i\le j} K_{ij}$ is compact and each set in $(\underline{K}_\al)$ is a neighborhood, we can assume that this
family is finite. Since $\mathcal{C}^m_p$ is a net, it follows that, passing to the union, we obtain the required $\underline{K}'$.
\end{proof}

\subsection*{The holomorphic version: locally solvable algebras}
Now we turn to algebras of triangular matrices composed of holomorphic functions.

We consider tuples
$\underline{K}=\{K_{ij}\!:\, 1\le i\le j \le p\}$  of compact subsets of
 $\CC^m$ and tuples $$\underline{W}=\{W_{ij}\!:\, 1\le i\le j \le p\}$$ of open subsets of~$\CC^m$  such that $W_{ik}\subset W_{ij}\cap W_{jk}$ for all admissible indices. (The notation $\mathcal{C}^m_p$ is used
for the family of compact subsets satisfying the same conditions as in the real case except the density of
interior.)

Let $A(K_{ij})$ denote the Banach algebra of functions holomorphic inside $K_{ij}$  and continuous on the
closure of $K_{ij}$ (endowed with the max-norm). Then
$\rT_{p}(A(\underline{K}))$ is a Banach space.

\begin{pr}\label{Tseqhol}
If $p,m\in\N$ and $\underline{K}\in\mathcal{C}^m_p$, then $\rT_{p}(A(\underline{K}))$   is a Banach solvable algebra.
\end{pr}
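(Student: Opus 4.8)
The plan is to follow the same structural decomposition as in Proposition~\ref{Tseqco}, but to replace the polynomial-growth machinery by a direct Lie-algebraic solvability argument. This change of strategy is forced: algebras of the form $A(K)$ need not be of polynomial growth (already $A$ of a disc fails it), so over $\CC$ the best one can hope for at this level is solvability. First I would check that $\rT_{p}(A(\underline{K}))$ really is a Banach algebra. The hypothesis $K_{ik}\subset K_{ij}\cap K_{jk}$ guarantees that for $f\in A(K_{ij})$ and $g\in A(K_{jk})$ the restrictions to $K_{ik}$ have a well-defined product lying in $A(K_{ik})$, and submultiplicativity of the max-norm under restriction makes the matrix multiplication continuous, exactly as in the real case.

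Next I would regard $B:=\rT_{p}(A(\underline{K}))$ as a Lie algebra under the commutator bracket and single out the ideal $I$ of matrices with zero diagonal. The key observation is that every commutator has vanishing diagonal: since the factors are upper triangular, the $(i,i)$-entries of $fg$ and $gf$ are $f_{ii}g_{ii}$ and $g_{ii}f_{ii}$ respectively, and these coincide because $A(K_{ii})$ is commutative. Hence $[B,B]\subset I$, so that $B/I\cong A(K_{11})\times\cdots\times A(K_{pp})$ is commutative and in particular an abelian, hence solvable, Lie algebra. It then remains to handle $I$ itself, and here I would note that $I$ is in fact nilpotent as an associative algebra: a nonzero term in a product of $p$ strictly upper triangular matrices would require a strictly increasing chain $i_0<i_1<\cdots<i_p$ of indices in $\{1,\dots,p\}$, which is impossible, so $I^{p}=0$ (the restriction conventions play no role, since the index chains are already empty). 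An associatively nilpotent algebra is nilpotent, a fortiori solvable, as a Lie algebra. Since solvability of Lie algebras is stable under extensions, the exact sequence $0\to I\to B\to B/I\to 0$ with both ends solvable gives that $B$ is solvable.

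I do not expect a serious obstacle; the argument is essentially the purely algebraic skeleton underlying the split nilpotent extension used in Proposition~\ref{Tseqco}. The only points that deserve a moment's care are that the multiplication is well defined and continuous despite the entries lying in genuinely different function algebras, and that the commutator's diagonal really vanishes entrywise; both reduce to the support and restriction conventions fixed before the statement, together with the commutativity of each diagonal algebra $A(K_{ii})$.
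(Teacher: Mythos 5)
Your proposal is correct and follows essentially the same route as the paper: the paper's proof simply notes that multiplication is continuous (so one has a Banach algebra) and that the commutator subspace $[\rT_{p}(A(\underline{K})),\rT_{p}(A(\underline{K}))]$ is nilpotent, whence solvability — which is exactly your observation that commutators land in the strictly upper triangular ideal $I$ with $I^{p}=0$. Your write-up merely makes explicit the details the paper leaves as ``easy to see.''
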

\begin{proof}
Since multiplication in $\rT_{p}(A(\underline{K}))$  is continuous, it follows that this is a Banach algebra. It is
easy to see that the subspace $[\rT_{p}(A(\underline{K})),\rT_{p}(A(\underline{K}))]$  is nilpotent, and hence this algebra is solvable.
\end{proof}

Consider the vector space $\rT_{p}(\cO(\underline{W}))$  of upper triangular matrices $(f_{ij})$  in which every $f_{ij}$ belongs to
the space $\cO(W_{ij})$ of holomorphic functions; we endow it with the same multiplication as above and with
the product topology.

\begin{thm}\label{corTseqho}
Let  $p,m\in\N$, and let $\underline{W}=\{W_{ij}\!:\, 1\le i\le j \le p\}$  be a tuple of open subsets of~$\CC^m$ such
that $W_{ik}\subset W_{ij}\cap W_{jk}$ for all admissible indices. Then  $\rT_{p}(\cO(\underline{W}))$ is the projective limit of Banach
algebras of the form $\rT_{p}(A(\underline{K}))$, where $\underline{K}\in\mathcal{C}^m_p$ and $\underline{K}\subset \underline{W}$; therefore, this is a locally solvable
Fr\'echet--Arens--Michael algebra.
\end{thm}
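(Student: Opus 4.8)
The plan is to mirror the structure of the proof of Theorem~\ref{corTseqco}, since the statement is the exact holomorphic analogue. As in the smooth case, by Proposition~\ref{Tseqhol} it suffices to establish only the first assertion, namely that $\rT_{p}(\cO(\underline{W}))$ is the projective limit of the Banach algebras $\rT_{p}(A(\underline{K}))$ with $\underline{K}\in\mathcal{C}^m_p$ and $\underline{K}\subset\underline{W}$; once this is known, the algebra is automatically a projective limit of Banach solvable algebras, i.e.\ locally solvable, and being a countable (cofinal) projective limit it is Fr\'echet. (Here I should remark that, just as in Theorem~\ref{corTseqco}, the relevant $\mathcal{C}^m_p$ is the holomorphic family, defined without the density-of-interior condition, so that the diagonal algebras $A(K_{jj})$ of functions holomorphic in the interior and continuous up to the boundary behave properly.)

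First I would verify that the family of $\underline{K}\in\mathcal{C}^m_p$ with $\underline{K}\subset\underline{W}$ is again directed under inclusion. This goes through verbatim as before, because the only algebraic constraint defining $\mathcal{C}^m_p$ is the containment $K_{ik}\subset K_{ij}\cap K_{jk}$, and the computation
$$K_{ik}\cup K'_{ik}\subset (K_{ij}\cup K'_{ij})\cap(K_{jk}\cup K'_{jk})$$
shows it is stable under componentwise union. For $\underline{K}\subset\underline{K}'$ one has the obvious restriction homomorphism $\rT_{p}(A(\underline{K}'))\to\rT_{p}(A(\underline{K}))$, which is a contractive algebra map, giving a projective system over this directed set.

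Next I would recall that $\cO(W_{ij})$, with its usual Fr\'echet topology of uniform convergence on compacta, is the projective limit of the spaces $A(K_{ij})$ as $K_{ij}$ ranges over compact subsets of $W_{ij}$ (this is the standard description of the topology on spaces of holomorphic functions). Taking the product over $i\le j$, the Banach space $\rT_{p}(\cO(\underline{W}))$ is the projective limit of the Banach spaces $\rT_{p}(A(\underline{K}))$ with $\underline{K}\subset\underline{W}$ in the category of locally convex spaces. Thus the remaining point is purely set-theoretic: given any tuple $\underline{K}\subset\underline{W}$ of compacta, I must produce $\underline{K}'\in\mathcal{C}^m_p$ with $\underline{K}\subset\underline{K}'\subset\underline{W}$, so that the subsystem indexed by $\mathcal{C}^m_p$ is cofinal and hence has the same projective limit.

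The cofinality argument I would transcribe from Theorem~\ref{corTseqco} with only cosmetic changes. For fixed $i'\le k'$ and any point $x\in W_{i'k'}$, since $x\in W_{i'j}\cap W_{jk'}$ for all $i'\le j\le k'$, I can choose a closed polydisc (or Euclidean ball) $S$ centered at $x$ contained in all these finitely many open sets, and set $K_{ik}:=S$ when $i'\le i\le k\le k'$ and $K_{ik}:=\emptyset$ otherwise; this tuple lies in $\mathcal{C}^m_p$. Covering the compact set $\coprod_{i\le j}K_{ij}$ inside $\coprod_{i\le j}W_{ij}$ by finitely many such tuples and taking their componentwise union (which stays in $\mathcal{C}^m_p$ by directedness and inside $\underline{W}$) yields the desired $\underline{K}'$. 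I do not anticipate a genuine obstacle: the one place demanding attention is checking that the passage from $C^n$ to $A(K_{ij})$ does not require the density-of-interior hypothesis used in the real case — here it is dropped by definition of the holomorphic $\mathcal{C}^m_p$, and the geometric covering argument only uses that the chosen balls sit inside the relevant opens, so it carries over unchanged.
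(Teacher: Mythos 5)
Your proposal is correct and follows exactly the route the paper takes: the paper's entire proof of Theorem~\ref{corTseqho} is the remark that one repeats the argument of Theorem~\ref{corTseqco}, substituting Proposition~\ref{Tseqhol} for Proposition~\ref{Tseqco}, which is precisely what you carry out (directedness of the index family, identification of $\cO(W_{ij})$ as the projective limit of the $A(K_{ij})$, and the cofinality/covering argument). The only slip is cosmetic: $\rT_{p}(\cO(\underline{W}))$ is a Fr\'echet space, not a Banach space, in the sentence where you pass to the projective limit of the $\rT_{p}(A(\underline{K}))$.
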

The proof of this theorem is similar to that of Theorem~\ref{corTseqco}  but uses Proposition~\ref{Tseqhol} instead of
Proposition~\ref{Tseqco}.

\section{Locally defined algebras of noncommutative functions and sheaves}
\label{s:shnc}

Let $\fa\ff_1$ denote the two-dimensional Lie algebra (real or complex, depending on the context) with
basis $e_1,e_2$ and multiplication defined by $[e_1,e_2]=e_2$.
 In this section, we define a topological space
consisting of special indecomposable representations of $\fa\ff_1$  and show that, for each open set $V$, we can
define a Fr\'echet–Arens--Michael algebra $C^\infty_{\fa\ff_1}(V)$ of polynomial growth in the real case and a locally
solvable Fr\'echet–Arens--Michael algebra  $\mathfrak{F}_{\fa\ff_1}(V)$  in the complex case. We also prove the main results
of this paper, namely, that the corresponding functors determine sheaves (Theorems~\ref{af1PGSh} and~\ref{Faf1Vgen}).

\subsection*{The sheaf of noncommutative $C^\infty$ functions}
In the case where $\fg$ is a nilpotent Lie $\CC$-algebra and $V$ is an open subset of its character space,
multiplication in the universal enveloping algebra $U(\fg)$ continuously extends to
$C^\infty_{\fg}(V)\!:=C^\infty(V)^{\Z_+}$ (see \cite[Theorem 5.1]{ArOld}).  For the algebra $\fa\ff_1$, which is not nilpotent, the definition must be modified. The
idea is to consider the product of spaces of the form $C^\infty(V_q)$, where the open sets $V_q$ are subject to certain
relations, instead of the power of $C^\infty(V)$.

Consider the following family of indecomposable representations of the algebra~$\fa\ff_1$:
$$
\si_{r,q}(e_1)\!:=X_q+r\quad\text{and}\quad \si_{r,q}(e_2)\!:=Y_q\qquad(r\in\R,\, q\in\Z_+),
$$
where \begin{equation}\label{XpYp}
X_q\!:=
\begin{pmatrix}
q& &&& \\
 &q-1 && &\\
 & & \ddots&&\\
  & & &1&\\
   & & &&0
\end{pmatrix}\quad\text{and}\quad
Y_q\!:=
\begin{pmatrix}
0& 1&&& \\
&0& 1&& \\
 && \ddots& \ddots&\\
 & &&0 &1\\
  & &&&0
\end{pmatrix}.
\end{equation}
(For brevity, we identify finite-dimensional operators with their matrices.)

Let us denote the set $\{\si_{r,q}\!:\,r\in\R, \,q\in\Z_+\}$ by $\Om_\R$ and endow it with a topology as follows. Consider
the shift operation on the family of all subsets of $\R$:
$$
X+\mu\!:=\{x+\mu\!:\,x\in X\}\qquad(X\subset \R,\,\mu\in\R).
$$
We declare a subset of $\Om_\R$  to be open if it is of the form
\begin{equation}\label{openinOm}
\bigcup_{q=0}^\infty\{\si_{r,q}\!:\,r\in V_q\},
\end{equation}
where $(V_q)_{q\in\Z_+}$ is a sequence of open subsets of $\R$ satisfying the condition
\begin{equation}\label{Ppp1Up}
 V_q\cap (V_q+1)\supset V_{q+1}\qquad(q\in\Z_+);
\end{equation}
in particular, the sequence $(V_q)$ is nonincreasing. (The shift by $1$ appears because this number is an
eigenvalue of the operator $\ad e_1$  in the adjoint representation.) What we obtain is indeed a topology,
because the shift commutes with the intersection and union operations.  (Note that this topology is not
Hausdorff, because any open set containing  $\si_{r,q+1}$ contains also  $\si_{r,q}$ and $\si_{r-1,q}$.)

For an open subset $V$ of $\Om_\R$ represented in the form \eqref{openinOm}, we set
\begin{equation}\label{Ciaf1V}
C^\infty_{\fa\ff_1}(V)\!:=\prod_{q=0}^\infty C^\infty(V_q).
\end{equation}
(Note that if $V_q$  is empty for some $q$, then $C^\infty(V_q)$ is the zero algebra. Obviously, then the algebras for
larger $q$ are zero as well; in this case, the product can be regarded to be finite.)

In the case where $V=\Om_\R$, we have $V_q=\R$  for all $q$. The space $C^\infty_{\fa\ff_1}(\Om_\R)$ coincides with the space $C^\infty_{\fg}$ defined in \cite{ArOld} (for $\fg=\fa\ff_1$).

Writing the elements of the universal enveloping algebra $U(\fa\ff_1)$  in the form $a=\sum_j f_j(e_1)e_2^j$, where $f_j$ belongs to the polynomial algebra $\R[\la]$, we obtain the linear map
\begin{equation}\label{UtoCi}
U(\fa\ff_1)\to C^\infty_{\fa\ff_1}(V)\!:a\mapsto (f_q).
\end{equation}

\begin{thm}\label{af1CgU}
Let $V$ be an open subset of $\Om_\R$.  Then multiplication in $U(\fa\ff_1)$  extends to a continuous
multiplication in $C^\infty_{\fa\ff_1}(V)$. Moreover,  $C^\infty_{\fa\ff_1}(V)$  with this multiplication is a projective limit of
Banach $\R$-algebras of polynomial growth and, therefore, a Fr\'echet--Arens--Michael algebra of
polynomial growth.
\end{thm}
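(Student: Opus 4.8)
The plan is to reduce the abstract claim about $C^\infty_{\fa\ff_1}(V)$ to the concrete matrix algebras already analyzed in Section~\ref{s:usege}. First I would make the multiplication explicit. Writing elements of $U(\fa\ff_1)$ in the PBW form $a=\sum_j f_j(e_1)e_2^j$, the relation $[e_1,e_2]=e_2$ gives $e_2\,g(e_1)=g(e_1-1)\,e_2$, so that the product of $a=\sum_j f_j(e_1)e_2^j$ and $b=\sum_k g_k(e_1)e_2^k$ has the $e_2^n$-coefficient equal to $\sum_{j+k=n} f_j(e_1)\,g_k(e_1-j)$. Under the map \eqref{UtoCi} this means the $q$-th component of the product is $\sum_{j+k=q} f_j\cdot\big(g_k\circ(\,\cdot-j)\big)$, where the shift $g_k\mapsto g_k(\,\cdot-j)$ sends a function on $V_{q-j}$ to a function on $V_{q-j}+j$, and the compatibility condition \eqref{Ppp1Up} (iterated to $V_{q-j}+j\supset V_q$) guarantees that all these shifted functions restrict to the common domain $V_q$. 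I would verify that this formula defines a continuous associative multiplication on $\prod_q C^\infty(V_q)$ agreeing with that of $U(\fa\ff_1)$, the continuity being clear since on each factor it is a finite sum of pointwise products of (restrictions of) smooth functions.

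The key structural step is to recognize $C^\infty_{\fa\ff_1}(V)$, or rather suitable finite truncations of it, as one of the triangular matrix algebras of Theorem~\ref{corTseqco}. For each $p\in\N$ consider the quotient $C^\infty_{\fa\ff_1}(V)_p$ obtained by discarding components with index $\ge p$ (equivalently, working modulo the ideal $e_2^p U(\fa\ff_1)$). I would define $W_{ij}:=V_{j-i}+(i-1)$ for $1\le i\le j\le p$; the shift bookkeeping is arranged precisely so that the multiplication formula above becomes ordinary matrix multiplication, with the $(i,j)$-entry of the matrix associated to $a$ being (a shift of) $f_{j-i}$ living on $W_{ij}$. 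The condition \eqref{Ppp1Up}, reindexed, is exactly the requirement $W_{ik}\subset W_{ij}\cap W_{jk}$ needed to form $\rT_p(C^\infty(\underline W))$. Thus I expect an isomorphism of topological algebras
\begin{equation*}
C^\infty_{\fa\ff_1}(V)_p\;\cong\;\rT_p(C^\infty(\underline W)),
\end{equation*}
after which Theorem~\ref{corTseqco} immediately gives that each truncation is a Fr\'echet--Arens--Michael $\R$-algebra of polynomial growth.

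Finally I would pass to the limit. The full algebra $C^\infty_{\fa\ff_1}(V)=\prod_q C^\infty(V_q)$ is the projective limit of its truncations $C^\infty_{\fa\ff_1}(V)_p$ as $p\to\infty$, with the obvious surjective truncation homomorphisms, and this is a projective limit in the category of locally convex algebras because the product topology is the limit topology. Since each $\rT_p(C^\infty(\underline W))$ is itself a projective limit of Banach algebras of polynomial growth (Theorem~\ref{corTseqco}), and polynomial growth is preserved under projective limits, the composite expresses $C^\infty_{\fa\ff_1}(V)$ as a projective limit of Banach $\R$-algebras of polynomial growth; hence it is a Fr\'echet--Arens--Michael algebra of polynomial growth.

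I expect the main obstacle to be the shift bookkeeping in the second step: getting the indices, the direction of the shifts, and the identification $W_{ij}=V_{j-i}+(i-1)$ exactly right so that the enveloping-algebra product really does coincide with matrix multiplication, and checking that the condition \eqref{Ppp1Up} translates precisely into the nesting condition on $\underline W$. Once that identification is pinned down, the rest is a formal application of the already-established Theorem~\ref{corTseqco} together with the stability of polynomial growth under projective limits.
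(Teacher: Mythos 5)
Your overall strategy is the same as the paper's: represent $a=\sum_j f_j(e_1)e_2^j$ by upper triangular matrices whose $(i,j)$-entry is a translate of $f_{j-i}$, check that condition \eqref{Ppp1Up} yields exactly the nesting condition $W_{ik}\subset W_{ij}\cap W_{jk}$ for $W_{ij}=V_{j-i}+(i-1)$ (the paper uses the globally shifted variant $W^{q+1}_{ij}=V_{j-i}-q-1+i$, which is the same up to a translation of the variable), and then invoke Theorem~\ref{corTseqco}. Your explicit convolution formula $\sum_{j+k=q}f_j\cdot g_k(\cdot-j)$ for the product is correct and is a mild variation on the paper, which instead obtains the multiplication by completing $U(\fa\ff_1)$ inside $\prod_q\rT_{q+1}(C^\infty(\underline{W}_{(q+1)}))$ after checking continuity and topological injectivity of $\rho_V$; your route avoids the appeal to Weierstrass density, the paper's avoids verifying associativity and continuity of the formula by hand.

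There is, however, one concrete error: the claimed isomorphism $C^\infty_{\fa\ff_1}(V)_p\cong\rT_p(C^\infty(\underline W))$ is false. The matrix algebra has $p(p+1)/2$ independent entries $h_{ij}\in C^\infty(W_{ij})$, whereas your truncation has only $p$ components $f_0,\dots,f_{p-1}$; the image of your map is the proper subalgebra of matrices satisfying $h_{ij}(x)=h_{1,\,j-i+1}(x-i+1)$, i.e.\ whose entries along each diagonal are translates of a single function (already for $p=2$ the diagonal entries $h_{11}$ and $h_{22}$ of a general element of $\rT_2$ are independent, while in your image $h_{22}=h_{11}(\cdot-1)$). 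The argument survives because that image is a \emph{closed} subalgebra, the map onto it is a topological isomorphism (the inverse is projection onto the first row, since $W_{1,k+1}=V_k$), and the class of projective limits of Banach algebras of polynomial growth is stable under passage to closed subalgebras and products (\cite[Proposition~2.11]{ArOld}, which is exactly the stability the paper invokes). So replace ``isomorphism'' by ``topological isomorphism onto a closed subalgebra'' and add the closed-subalgebra stability step; with that repair your proof is complete and matches the paper's in substance.
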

\begin{proof}
We argue as in \cite[Example~4.7]{ArOld} with the only difference that, instead of $\rT_{q+1}(C^\infty(\R))$  we use
an algebra of the form $\rT_{q+1}(C^\infty(\underline{W}))$ ) with a special $\underline{W}$.

Let $\R[\la;\rT_{q+1}]$  denote the algebra of matrix-valued polynomials. For each $q\in\Z_+$, consider the
homomorphism
\begin{equation*}%\label{tild2d}
\wt\pi_q\!:U(\fa\ff_1)\to \R[\la;\rT_{q+1}],\quad \wt\pi_q(a)\!:=(\la\mapsto\si_{\la,q}(a)).
\end{equation*}
According to the Poincar\'e-Birkhoff-Witt theorem, any element of $U(\fa\ff_1)$ can be represented in the form $a=\sum_j f_j(e_1)e_2^j$.
 It is easy to check that
\begin{equation}\label{tildfull2d}
[\wt\pi_q(a)](\la)=\begin{pmatrix}
f_0(\la+q)&f_1(\la+q) &\cdots&&f_q(\la+q) \\
 &f_0(\la+q-1) && &\\
 & & \ddots&&\vdots\\
  & & &f_0(\la+1)&f_1(\la+1)\\
   & & &&f_0(\la)
\end{pmatrix}.
\end{equation}

We set
\begin{equation}\label{Wq+1}
W^{q+1}_{ij}\!:=V_{j-i}-q-1+i\qquad(1\le i\le j\le q+1),
\end{equation}
and
$$
\underline{W}_{(q+1)}\!:=\{W^{q+1}_{ij}\!:\, 1\le i\le j \le q+1\}.
$$
Obviously $\underline{W}_{(q+1)}$  is a tuple of open subsets of~$\R$.

Let us check that $W^{q+1}_{ik}\subset W^{q+1}_{ij}\cap W^{q+1}_{jk}$ for $1\le i\le j \le k\le q+1$.
It follows from \eqref{Ppp1Up} that $V_{k-i}\subset V_{j-i}$ and  $V_{k-i} \subset V_{k-j}+j-i$ (because $k-j=(k-i)-(j-i)$). Therefore,
$$
W^{q+1}_{ik}=V_{k-i}-q-1+i\subset (V_{j-i}-q-1+i)\cap ( V_{k-j}-q-1+j) = W^{q+1}_{ij}\cap W^{q+1}_{jk}.
$$

Thus, $\rT_{q+1}(C^\infty(\underline{W}_{(q+1)}))$ satisfies the assumptions of Theorem~\ref{corTseqco} and, therefore, is a projective limit
of Banach $\R$-algebras of polynomial growth and a Fr\'echet algebra.

Increasing the range of each $\wt\pi_q$ to $\rT_{q+1}(C^\infty(\underline{W}_{(q+1)}))$, we can consider the homomorphism
\begin{equation*}
\rho_V\!:U(\fa\ff_1)\to \prod_{q=0}^\infty \rT_{q+1}(C^\infty(\underline{W}_{(q+1)})) \!: a\mapsto (\wt\pi_q(a)).
\end{equation*}
Note that $U(\fa\ff_1)$  is a dense subspace o $C^\infty_{\fa\ff_1}(V)$ (because, by Weierstrass' theorem on approximation
of continuously differentiable functions, the polynomial algebra is dense in each $C^\infty(V_q)$).
Since the
class of Fr\'echet algebras is stable with respect to the passage to countable Cartesian products and
closed subalgebras and the same is true of the class of projective limits of Banach algebras of polynomial
growth \cite[Proposition~2.11]{ArOld},  it follows that, to complete the proof, it suffices to show that $\rho_V$ is continuous and topologically injective (with respect to the restriction of the topology of  $C^\infty_{\fa\ff_1}(V)$ to $U(\fa\ff_1)$).

First, we prove the continuity. To this end, we need to show that the homomorphism
 $U(\fa\ff_1)\to  \rT_{q+1}(C^\infty(\underline{W}_{(q+1)}))$ is continuous for each~$q$.
Recall that the topology of $C^\infty(U)$, where $U$ is an open subset of $\R^m$, is
determined by the prenorms
\begin{equation*}%\label{BCCin0}
|f|_{K,n}\!:=\sum_{\be\in \Z_+^m,\,|\be|=n}|f^{(\be)}|_{K,0}\,,\qquad
\text{where}\quad |f|_{K,0}\!:=\max_{x\in K}|f(x)|,
\end{equation*}
$n\in\Z_+$ and $K$ is any compact subset of~$U$. (Here $|\be|\!:=\be_1+\cdots \be_m$ for $\be=(\be_1,\ldots, \be_m)\in\Z_+^m$ and~$f^{(\be)}$ denotes the partial derivative of the function~$f$.)

On the one hand, the topology of $\rT_{q+1}(C^\infty(\underline{W}_{(q+1)}))$  is determined by the prenorms
\begin{equation}\label{Kijnfaf1}
h\mapsto |h_{ij}|_{K_{ij},n},\qquad h=(h_{ij})\in \rT_{q+1}(C^\infty(\underline{W}_{(q+1)})),
\end{equation}
where $1\le i\le j\le q+1$, $K_{ij}$ is a compact subset of $W_{ij}$, and $n\in\Z_+$. On the other hand, the topology
of $U(\fa\ff_1)$, which is inherited from $C^\infty_{\fa\ff_1}(V)$, is determined by the prenorms
\begin{equation}\label{qMlfaf1}
a\mapsto |f_p|_{M,l}\qquad\text{($p,l \in \Z_+$,   $M\subset V_p$  is compact)}.
\end{equation}

Estimating the matrix elements in~\eqref{tildfull2d}, we obtain
$$
|\wt\pi_q(a)_{ij}|_{K_{ij},n}\le |f_{j-i}|_{K_{ij}+q+1-i,\,n}\qquad(i\le j)
$$
for each $a\in U(\fa\ff_1)$. Therefore, to prove the continuity of  $\rho_V$, it suffices, given any set $\underline{K}$ of
compact subsets such that  $\underline{K}\subset\underline{W}_{(q+1)}$, to find compact sets  $M_{k}\subset V_{k}$, $k=0,\ldots,q$, such that $K_{ij}+q+1-i\subset M_{j-i}$ for $i\le j$.
Since
$$
K_{ij}+q+1-i\subset W^{q+1}_{ik}+q+1-i=V_{j-i},
$$
it follows that the $M_k\!:=\bigcup_{j-i=k}K_{ij}$  are as required. This proves the continuity.

Now we show that $\rho_V$ is topologically injective. To this end, we must estimate each prenorm in~\eqref{qMlfaf1} (for given $p,l\in \Z_+$ and a compact subset $M$ of~$V_p$) in terms of prenorms of the form~\eqref{Kijnfaf1}.  According
to \eqref{Wq+1},  we have $W^{p+1}_{1,p+1}\!:=V_p-p$. Therefore, $M-p \subset W^{p+1}_{1,p+1}$. Now consider the homomorphism $\wt\pi_p$.
Since  $f_p$ corresponds to the upper right corner entry in~\eqref{tildfull2d} (with $q=p$),  it follows that
\begin{equation*}%\label{fpabn}
|f_p|_{M,l}=|(\wt\pi_p(a))_{1,p+1}|_{M-p,l}.
\end{equation*}
Hence $\rho$ is topologically injective, which completes the proof of the theorem.
\end{proof}

Recall that a \emph{sheaf of Fr\'echet spaces (Fr\'echet algebras)} is defined as the presheaf of  Fr\'echet spaces (Fr\'echet algebras)  which turns into a sheaf of sets after the application of the forgetting functor (see
the discussion in \cite[Remark~5.4]{ArOld}).  In what follows, sheaves in subcategories of Fr\'echet algebras are
understood in the same sense.

Let $V$ and $W$ be open subsets of $\Om_\R$, and let $W\subset V$. Then the topology on $U(\fa\ff_1)$, which
is determined by the embedding $U(\fa\ff_1)\to C^\infty_{\fa\ff_1}(V)$, is finer than that determined by the embedding
 $U(\fa\ff_1)\to C^\infty_{\fa\ff_1}(W)$.  Since both algebras are completions of $U(\fa\ff_1)$, we have a continuous homomorphism $\tau_{VW}\!:C^\infty_{\fa\ff_1}(V)\to C^\infty_{\fa\ff_1}(W)$.

\begin{thm}\label{af1PGSh} \emph{(cf. \cite[Theorem 5.5]{ArOld})}
The correspondences
$$
V\mapsto C^\infty_{\fa\ff_1}(V)\quad\text{and}\quad (W\subset V)\mapsto \tau_{VW}
$$
determine a sheaf of Fr\'echet--Arens--Michael $\R$-algebras of polynomial growth on $\Om_\R$.
\end{thm}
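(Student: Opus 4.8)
The plan is to note that, by Theorem~\ref{af1CgU} together with the remarks preceding the statement, we already have a presheaf of Fréchet--Arens--Michael $\R$-algebras of polynomial growth: each $C^\infty_{\fa\ff_1}(V)$ is an algebra of the required type, the maps $\tau_{VW}$ are continuous homomorphisms, and functoriality holds because these maps are determined by their common action on the dense subalgebra $U(\fa\ff_1)$. Since, by the adopted definition, a sheaf in this category is a presheaf whose underlying presheaf of sets is a sheaf, it remains only to verify the separation and gluing axioms for sets. The whole argument reduces these, factor by factor in the product~\eqref{Ciaf1V}, to the classical fact that $C^\infty$ is a sheaf on~$\R$; this is the exact analogue of the nilpotent case \cite[Theorem~5.5]{ArOld}, with the power $C^\infty(V)^{\Z_+}$ replaced by the product $\prod_q C^\infty(V_q)$.

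First I would set up the dictionary between the topology of $\Om_\R$ and the component sequences. For an open set $V=\bigcup_q\{\si_{r,q}:r\in V_q\}$ the relation $\si_{r,q}\in V$ means simply $r\in V_q$; hence $W\subset V$ is equivalent to $W_q\subset V_q$ for all~$q$, an open cover $V=\bigcup_\al V^{(\al)}$ gives $V_q=\bigcup_\al V_q^{(\al)}$ for each~$q$ (an open cover of $V_q$ in~$\R$), and a finite intersection satisfies $(V^{(\al)}\cap V^{(\be)})_q=V_q^{(\al)}\cap V_q^{(\be)}$. The compatibility condition~\eqref{Ppp1Up} never has to be rechecked along the way, since all these unions and intersections are open in $\Om_\R$ by the very definition of the topology. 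I would also record that $\tau_{VW}$ is componentwise restriction: on the dense subalgebra $U(\fa\ff_1)$ the map~\eqref{UtoCi} sends $a=\sum_j f_j(e_1)e_2^j$ to $(f_q)$, so $\tau_{VW}$ acts there by $(f_q)\mapsto(f_q|_{W_q})$, and by continuity and density it is the map $(g_q)\mapsto(g_q|_{W_q})$ from $\prod_q C^\infty(V_q)$ to $\prod_q C^\infty(W_q)$.

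With this dictionary both axioms follow factorwise. For separation, if two sections $(s_q)$ and $(t_q)$ of $C^\infty_{\fa\ff_1}(V)$ have equal restrictions to every member of a cover $V=\bigcup_\al V^{(\al)}$, then $s_q|_{V_q^{(\al)}}=t_q|_{V_q^{(\al)}}$ for all $q$ and $\al$; since $V_q=\bigcup_\al V_q^{(\al)}$ and $C^\infty$ is a sheaf on~$\R$, we obtain $s_q=t_q$ for each~$q$, hence $(s_q)=(t_q)$. For gluing, compatible sections $s^{(\al)}=(s_q^{(\al)})$ give, for each fixed~$q$, functions $s_q^{(\al)}\in C^\infty(V_q^{(\al)})$ agreeing on the overlaps $V_q^{(\al)}\cap V_q^{(\be)}$; the gluing property of the smooth sheaf on~$\R$ produces a unique $s_q\in C^\infty(V_q)$ restricting to each $s_q^{(\al)}$, and then $(s_q)\in\prod_q C^\infty(V_q)=C^\infty_{\fa\ff_1}(V)$ is the desired global section, unique by separation.

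The only genuinely delicate point is the bookkeeping of the second paragraph: one must be sure that passing to unions and intersections of open sets in $\Om_\R$ acts factorwise on the sequences $(V_q)$, that condition~\eqref{Ppp1Up} is inherited for free, and that the abstractly defined $\tau_{VW}$ really is componentwise restriction of smooth functions. Once this is settled, the Fréchet--Arens--Michael and polynomial-growth structure requires no further attention, as it is supplied factorwise by Theorem~\ref{af1CgU} and preserved by the continuous homomorphisms $\tau_{VW}$, so the verification of the set-theoretic sheaf axioms completes the proof.
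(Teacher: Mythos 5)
Your proposal is correct and follows essentially the same route as the paper: the paper's proof likewise observes that the presheaf structure is already supplied by Theorem~\ref{af1CgU} and the maps $\tau_{VW}$, and that the sheaf-of-sets axioms reduce, factor by factor in the product \eqref{Ciaf1V}, to the gluing axiom for the sheaf of $C^\infty$ functions on~$\R$. Your version merely spells out the factorwise bookkeeping (covers, restrictions, condition~\eqref{Ppp1Up}) that the paper leaves implicit.
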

\begin{proof}
The correspondence under consideration is a contravariant functor from the category of open
subsets of~$\Om_\R$  to the category of  Fr\'echet--Arens--Michael $\R$-algebras of polynomial growth,  i.e., we
have a presheaf. By definition (see \eqref{Ciaf1V})  for each open set $V\subset \Om_\R$,  the Fr\'echet space $C^\infty_{\fa\ff_1}(V)$  is the
countable product of the spaces $C^\infty(V_p)$.  The gluing axiom for $C^\infty$ functions on~$\R$ implies the gluing
axiom for the presheaf of sets on~$\Om_\R$.
\end{proof}

\subsection*{The sheaf of noncommutative holomorphic functions}

In this section, we consider the complexification of the real Lie algebra discussed above; to simplify
notation, we denote it by the same symbol $\fa\ff_1$. The basic constructions for it are the same as in the
real case. In particular, we need the family $\{\si_{\la,q}\!:\,\la\in\CC, \,q\in\Z_+\}$ of representations of the form~\eqref{XpYp}; we
denote it by $\Om_\CC$. We declare a subset of $\Om_\CC$ to be open if it has the form
\eqref{openinOm},
where $(V_q)_{q\in\Z_+}$ is a sequence
of open subsets of $\CC$ satisfying condition~\eqref{Ppp1Up}.

An important difference from the case of $C^\infty$  functions is that the polynomial algebra is not
necessarily dense in the algebra of holomorphic functions on an open set. However, this constraint
is of technical character and can easily be overcome.

For an open set $V$ in $\Om_\CC$ of the form~\eqref{openinOm}, we put
\begin{equation}\label{Fiaf1V}
\mathfrak{F}_{\fa\ff_1}(V)\!:=\prod_{q=0}^\infty \cO(V_q),
\end{equation}
where $\cO(V_q)$ denotes the algebra of holomorphic functions. In particular, the Fr\'echet space $\mathfrak{F}_{\fa\ff_1}(\Om_\CC)$ coincides with the space $\mathfrak{F}_{\fa\ff_1}$  defined in \cite[Sec.\,6]{ArOld}.

Let $\mathcal{B}$ denote the family of all open subsets $V=(V_q)$ of~$\Om_\CC$ satisfying the following conditions:

(1)~there is a $p\in\Z_+$  such that the sets $V_q$  are empty if $q>p$;

(2)~for each $q=0,\ldots, p$, there exists an $\ep\in(0,1)$ such that $V_q$  is the union of open disks of radius $\ep$ centered at $\si_{\la,q},\ldots,\si_{\la-p+q,q}$   (in particular, $V_p$ is a disk centered a $\si_{\la,p}$).

It is easy to see that
$\mathcal{B}$  is a base for the topology of $\Om_\CC$.

\begin{lm}
If $V$ belongs to $\mathcal{B}$,  then the image of the linear map $U(\fa\ff_1)\to \mathfrak{F}_{\fa\ff_1}(V)$ defined by~\eqref{UtoCi}  is
dense.
\end{lm}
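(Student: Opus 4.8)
The plan is to reduce the assertion to the classical Runge approximation theorem, applied separately in each factor of the product \eqref{Fiaf1V}.

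I would begin by exploiting condition~(1) in the definition of $\mathcal{B}$. Since $V_q=\emptyset$ for $q>p$, the space $\cO(V_q)$ is zero for such $q$, so that $\mathfrak{F}_{\fa\ff_1}(V)=\prod_{q=0}^p\cO(V_q)$ is a \emph{finite} product, carrying the product topology (each $\cO(V_q)$ equipped with the topology of uniform convergence on compact subsets). Next I would identify the image of the map~\eqref{UtoCi}. Writing $a\in U(\fa\ff_1)$ as $a=\sum_j f_j(e_1)e_2^j$ with $f_j\in\CC[\la]$, the map sends $a$ to the tuple whose $q$-th coordinate is the restriction of the polynomial $f_q$ to $V_q$. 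Because the $f_j$ are arbitrary and independent of one another, the image is exactly the \emph{full} product $\prod_{q=0}^p P_q$, where $P_q\subset\cO(V_q)$ denotes the subspace of restrictions of polynomials. For a finite Cartesian product equipped with the product topology, $\prod_q P_q$ is dense if and only if each $P_q$ is dense in $\cO(V_q)$; thus the claim splits into $p+1$ independent one-variable approximation problems.

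It then remains to show that polynomials are dense in $\cO(V_q)$ for each $q\le p$. By Runge's theorem, this holds precisely when $\wh\CC\setminus V_q$ is connected in the Riemann sphere $\wh\CC=\CC\cup\{\infty\}$, equivalently when $\CC\setminus V_q$ has no bounded connected component. Here I would invoke condition~(2): $V_q$ is a finite union of open disks of radius $\ep<1$ whose centers $\la,\la-1,\dots,\la-p+q$ all lie on the single line $\Im z=\Im\la$. A union of finitely many disks whose centers are collinear cannot enclose a hole, so its complement consists of one (unbounded) component; hence $\wh\CC\setminus V_q$ is connected, and Runge's theorem yields the density of $P_q$. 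Together with the reduction above, this proves the lemma.

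The main point --- and the only place where the particular geometry of the base $\mathcal{B}$ is used --- is the verification that $\wh\CC\setminus V_q$ is connected, which is exactly what makes \emph{polynomial} (rather than merely rational) approximation available; once this is established the rest is the formal reduction to a finite product. The collinearity of the centers $\la-k$ is precisely what guarantees the absence of bounded complementary components, irrespective of whether consecutive disks overlap (as they do when $\ep>1/2$).
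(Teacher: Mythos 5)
Your proof is correct, and it follows the same skeleton as the paper's: reduce the question to the density of the polynomials in each factor $\cO(V_q)$ of the (finite) product, then invoke a classical approximation theorem. The difference is in the second step. The paper observes that $V_q$ is ``a union of disjoint open disks'' and appeals to Mergelyan's theorem, whereas you argue that $\wh\CC\setminus V_q$ is connected because the centers $\la,\la-1,\dots,\la-p+q$ are collinear, and then apply Runge's theorem. Your route is, if anything, the more careful one: for $\ep>1/2$ consecutive disks of radius $\ep$ centered at points one apart actually overlap, so the disjointness asserted in the paper is not literally accurate; what either theorem really needs is connectedness of the complement, and your collinearity argument (each line perpendicular to the line of centers meets the union in an interval, so no bounded complementary component can arise) establishes exactly that, overlapping or not. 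Runge's theorem is also the more natural tool here, since it directly characterizes when polynomials are dense in $\cO(U)$ for an \emph{open} set $U$, while Mergelyan's theorem is a statement about compact sets and has to be combined with an exhaustion argument to yield density in the compact-open topology. Your identification of the image of $U(\fa\ff_1)$ as the full product $\prod_{q=0}^p P_q$ (via the Poincar\'e--Birkhoff--Witt form $a=\sum_j f_j(e_1)e_2^j$ with independent $f_j$) and the remark that a finite product of subspaces is dense iff each factor is, are both correct and make explicit what the paper leaves implicit.
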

\begin{proof}
 It suffices to show that the image of the map $\CC[\la]\to \cO(V_q)$ is dense for each $q$. Since $V$ belongs
to $\mathcal{B}$, it follows that $V_q$ is either the empty set or a union of disjoint open disks in~$\CC$. Thus, the required
assertion readily follows from Mergelyan's theorem (see, e.g., \cite[p.\,386, Theorem 20.5]{Ru87}).
\end{proof}

The three theorems concluding the paper are similar to Theorems~6.2, 6.5, and~6.6 in~\cite{ArOld}.

\begin{thm}\label{FgV}
Let $V=(V_q)$ be an open subset of $\Om_\CC$ such that the polynomial algebra is dense
in  $\cO(V_q)$  for each~$q$ (e.g., $V\in\mathcal{B}$).
Then the multiplication in $U(\fa\ff_1)$ can be extended to a
continuous multiplication in $\mathfrak{F}_{\fa\ff_1}(V)$. With respect to this multiplication,  $\mathfrak{F}_{\fa\ff_1}(V)$is a locally
solvable Fr\'echet--Arens--Michael $\CC$-algebra.
\end{thm}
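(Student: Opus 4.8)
The plan is to mirror the proof of Theorem~\ref{af1CgU} almost verbatim, replacing $C^\infty$ by $\cO$, the property ``polynomial growth'' by ``local solvability'', and the appeal to Theorem~\ref{corTseqco} by an appeal to Theorem~\ref{corTseqho}. First I would introduce, for each $q\in\Z_+$, the homomorphism $\wt\pi_q\colon U(\fa\ff_1)\to\CC[\la;\rT_{q+1}]$ given by $\wt\pi_q(a)(\la):=\si_{\la,q}(a)$; the Poincar\'e--Birkhoff--Witt computation yielding the explicit matrix~\eqref{tildfull2d} is purely algebraic and so carries over unchanged to the complex field. With the same sets $W^{q+1}_{ij}:=V_{j-i}-q-1+i$ and tuples $\underline{W}_{(q+1)}$ as in~\eqref{Wq+1}, the containment $W^{q+1}_{ik}\subset W^{q+1}_{ij}\cap W^{q+1}_{jk}$ is deduced from condition~\eqref{Ppp1Up} by exactly the same manipulation of eigenvalue shifts, since that verification never used anything specific to $\R$.

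Next I would invoke Theorem~\ref{corTseqho} to conclude that each $\rT_{q+1}(\cO(\underline{W}_{(q+1)}))$ is a locally solvable Fr\'echet--Arens--Michael algebra, and assemble the $\wt\pi_q$ into a single homomorphism
\[
\rho_V\colon U(\fa\ff_1)\to\prod_{q=0}^\infty\rT_{q+1}(\cO(\underline{W}_{(q+1)})),\qquad a\mapsto(\wt\pi_q(a)).
\]
Since the class of locally solvable Fr\'echet--Arens--Michael $\CC$-algebras is stable under countable Cartesian products and under passage to closed subalgebras (a closed subalgebra of a projective limit of Banach solvable algebras is again such a limit, and a countable product of such limits is one), it then suffices to show that $\rho_V$ is continuous and topologically injective; the sought multiplication on $\mathfrak{F}_{\fa\ff_1}(V)$ is then the one transported along the completion of the dense image of $U(\fa\ff_1)$.

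For the two estimates I would use that the topology of $\cO(W)$ is generated by the sup-norms $|f|_K:=\max_{z\in K}|f(z)|$ over compact $K\subset W$, so that no derivative seminorms enter and the computation is in fact simpler than in the $C^\infty$ case. Continuity follows from the corner-by-corner bound $|\wt\pi_q(a)_{ij}|_{K_{ij}}\le|f_{j-i}|_{K_{ij}+q+1-i}$ together with the inclusion $K_{ij}+q+1-i\subset V_{j-i}$ coming from~\eqref{Wq+1}; topological injectivity follows from the identity $|f_p|_M=|(\wt\pi_p(a))_{1,p+1}|_{M-p}$, i.e. the coefficient $f_p$ is recovered as the upper-right corner of $\wt\pi_p(a)$, exactly as before.

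The one genuinely new point --- the only place where the holomorphic case is not a formal transcription of the smooth one --- is the density of the image of $U(\fa\ff_1)$ in $\mathfrak{F}_{\fa\ff_1}(V)$. In the $C^\infty$ setting this was automatic from Weierstrass approximation, but for holomorphic functions polynomial density can fail, so here I would invoke precisely the hypothesis of the theorem, namely that $\CC[\la]$ is dense in each $\cO(V_q)$ (guaranteed for $V\in\mathcal{B}$ by the preceding Lemma, via Mergelyan's theorem). This is the step I expect to be the crux: everything else is a dictionary translation of the proof of Theorem~\ref{af1CgU}, whereas the density assumption is what replaces the Weierstrass theorem and makes the completion argument go through.
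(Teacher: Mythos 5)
Your proposal is correct and follows essentially the same route as the paper: the author likewise reduces to the proof of Theorem~\ref{af1CgU}, replacing Theorem~\ref{corTseqco} by Theorem~\ref{corTseqho}, using the stability of locally solvable Arens--Michael algebras under products and closed subalgebras (Proposition~\ref{stlosol}), substituting the density hypothesis for the Weierstrass approximation step, and noting that the continuity and topological injectivity estimates involve only uniform norms. No gaps.
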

We need the following proposition, which is verified directly.
\begin{pr}\label{stlosol}
The class of locally solvable Arens--Michael algebras is stable with respect to the
passage to Cartesian products and closed subalgebras.
\end{pr}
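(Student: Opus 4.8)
The plan is to reduce the statement to two elementary facts about abstract Lie algebras together with the standard description of an Arens--Michael algebra by its associated Banach algebras. For a continuous submultiplicative prenorm $p$ on an Arens--Michael algebra $A$ write $A_p$ for the completion of $A/\Ker p$ in the induced norm; this is a Banach algebra, and if $\{p_\nu\}$ is a family of continuous submultiplicative prenorms defining the topology of $A$, then $A$ is the projective limit of the $A_{p_\nu}$ (replacing the family by its finite maxima if a directed index set is wanted, which by the facts below again yields solvable associated Banach algebras). The two algebraic facts are: (i) every subalgebra of a solvable Lie algebra is solvable, since the derived series of a subalgebra is contained term by term in that of the ambient algebra; and (ii) an arbitrary direct product of solvable Lie algebras is solvable, because the derived series of a product is the product of the derived series. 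The first thing to note is that local solvability can be read off from any defining family: if $A=\varprojlim_\mu A_\mu$ with all $A_\mu$ Banach solvable and $\pi_\mu\colon A\to A_\mu$ the structure maps, then for the prenorm $p_\mu:=\|\pi_\mu(\cdot)\|_{A_\mu}$ the Banach algebra $A_{p_\mu}$ is the closure of $\pi_\mu(A)$ in $A_\mu$, hence a closed subalgebra of a solvable algebra, so solvable by (i). Thus $A$ is locally solvable exactly when it admits a defining family of continuous submultiplicative prenorms with solvable associated Banach algebras, and it is this reformulation I would verify in each case.

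For a closed subalgebra $B$ of a locally solvable $A$, I would take a defining family $\{p_\nu\}$ of $A$ with all $A_{p_\nu}$ solvable, as just produced, and restrict it to $B$. The prenorms $p_\nu|_B$ are continuous, submultiplicative, and define the subspace topology; since $B$ is closed in the complete algebra $A$ it is complete, so $B$ is the projective limit of the Banach algebras $B_{p_\nu|_B}$. Because $p_\nu|_B$ is the restriction of $p_\nu$, the canonical map $B_{p_\nu|_B}\to A_{p_\nu}$ is isometric with complete, hence closed, image; therefore $B_{p_\nu|_B}$ is isomorphic to a closed subalgebra of the solvable Banach algebra $A_{p_\nu}$ and is solvable by (i). Hence $B$ is locally solvable.

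For a product $A=\prod_{i\in I}A_i$ of locally solvable algebras, I would choose for each $i$ a defining family $\{p_{i,\lambda}\}_\lambda$ of submultiplicative prenorms on $A_i$ with $(A_i)_{p_{i,\lambda}}$ solvable, and consider the prenorms $p_{i,\lambda}\circ\pi_i$ on $A$, where $\pi_i\colon A\to A_i$ is the coordinate projection. These are continuous and submultiplicative and together define the product topology, and $A$ is complete, so $A$ is the projective limit of the corresponding Banach algebras. The associated Banach algebra of $p_{i,\lambda}\circ\pi_i$ is exactly $(A_i)_{p_{i,\lambda}}$ (the quotient by its kernel is $A_i/\Ker p_{i,\lambda}$), which is solvable; hence $A$ is locally solvable. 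The only genuinely topological point in the whole argument, and the place to be slightly careful, is the identification of the associated Banach algebra of a restricted or pulled-back prenorm as a closed subalgebra (respectively a direct factor) of the given solvable Banach algebras; once this bookkeeping is in place, solvability is inherited through the purely algebraic facts (i) and (ii). If one preferred to phrase everything in terms of \emph{all} continuous submultiplicative prenorms rather than a fixed defining family, the extra input would be that a dense solvable subalgebra of a Banach algebra is solvable — a consequence of the continuity of the Lie bracket, as one checks by induction that the $n$-th term of the derived series of the closure lies in the closure of the $n$-th derived term of the dense subalgebra — but this is not needed for the proof sketched here.
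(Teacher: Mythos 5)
Your argument is correct and supplies exactly the ``direct verification'' that the paper omits: the paper states this proposition with no proof beyond the remark that it is verified directly, and your reduction to the Arens--Michael decomposition by submultiplicative prenorms, plus the facts that subalgebras and finite products of solvable Lie algebras are solvable, is the intended content. One caveat: your fact (ii) is false as stated for an \emph{infinite} direct product of solvable Lie algebras with unbounded derived lengths (the projection of the $n$-th derived subalgebra of the product onto each factor is the $n$-th derived subalgebra of that factor, so the product cannot be solvable unless the lengths are bounded); this does not damage your proof, because every associated Banach algebra you actually produce --- for a pullback prenorm $p_{i,\lambda}\circ\pi_i$ or for a finite maximum of prenorms --- is a closed subalgebra of a single factor or of a \emph{finite} product of solvable Banach algebras, and in the finite case (ii) does hold.
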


\begin{proof}[Proof of Theorem~\ref{FgV}]
This theorem is proved by the same argument as Theorem~\ref{af1CgU} with minor modifications. We consider the homomorphism
\begin{equation*}
U(\fa\ff_1)\to \prod_{q=0}^\infty \rT_{q+1}(\cO(\underline{W}_{(q+1)})) \!: a\mapsto (\wt\pi_q(a)),
\end{equation*}
where $\underline{W}_{(q+1)}$  is defined in the same way as previously. To check that the algebra $\rT_{q+1}(\cO(\underline{W}_{(q+1)}))$ is a locally solvable Fr\'echet--Arens--Michael algebra, we use Theorem~\ref{corTseqho}  instead of Theorem~\ref{corTseqco}. By
assumption, the homomorphism has dense image, and according to Proposition~\ref{stlosol}  it suffices to check
the continuity and topological injectivity. These properties can be proved in the same way as in Theorem~\ref{af1CgU}, because all estimates used in the proof involve only uniform norms (without derivatives).
\end{proof}

If $V$ and $W$ belong to $\mathcal{B}$ and $W\subset V$, then we have the continuous homomorphism
$\tau_{VW}\!:\mathfrak{F}_{\fa\ff_1}(V)\to \mathfrak{F}_{\fa\ff_1}(W)$.

\begin{thm}\label{nilShD}
On $\Om_\CC$,  the correspondences
$$
V\mapsto \mathfrak{F}_{\fa\ff_1}(V)\quad\text{and}\quad (W\subset V)\mapsto \tau_{VW}
$$
determine a sheaf of locally solvable Fr\'echet--Arens--Michael $\CC$-algebras on the topology
base $\mathcal{B}$.
\end{thm}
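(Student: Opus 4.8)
The plan is to follow the pattern of the proof of Theorem~\ref{af1PGSh}, reducing the sheaf axioms for $\mathfrak{F}_{\fa\ff_1}(-)$ to the corresponding axioms for the sheaf $\cO$ of holomorphic functions on open subsets of $\CC$, the essential new feature being that we now work on the base $\mathcal{B}$ rather than on the full topology of $\Om_\CC$.

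First I would record the presheaf structure. For every $V\in\mathcal{B}$ the density lemma above shows that $U(\fa\ff_1)$ has dense image in $\mathfrak{F}_{\fa\ff_1}(V)$, so Theorem~\ref{FgV} applies and makes $\mathfrak{F}_{\fa\ff_1}(V)$ a locally solvable Fr\'echet--Arens--Michael $\CC$-algebra. For $W\subset V$ with $V,W\in\mathcal{B}$, the homomorphism $\tau_{VW}$ is the unique continuous extension of the identity on $U(\fa\ff_1)$; on $U(\fa\ff_1)$ it is plainly coordinatewise restriction $(f_q)\mapsto(f_q|_{W_q})$, so by density and continuity $\tau_{VW}$ coincides with coordinatewise restriction $\prod_q\cO(V_q)\to\prod_q\cO(W_q)$. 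Functoriality, i.e. $\tau_{UW}\circ\tau_{VU}=\tau_{VW}$ and $\tau_{VV}=\id$, then follows at once, since all these maps restrict to the identity on the dense subalgebra $U(\fa\ff_1)$. Thus $\mathfrak{F}_{\fa\ff_1}(-)$ is a presheaf on $\mathcal{B}$ valued in the required category.

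Next I would pass to the underlying set-valued presheaf, which by \eqref{Fiaf1V} sends $V$ to $\prod_q\cO(V_q)$. The assignment $V\mapsto(V_q)_q$ is a bijection between the open subsets of $\Om_\CC$ and the sequences of open subsets of $\CC$ satisfying \eqref{Ppp1Up}, and, because the $\si_{r,q}$ with distinct $q$ are distinct points, it carries inclusions, unions, and intersections of open sets to the corresponding coordinatewise operations. In particular, if $V=\bigcup_\al U^{\al}$ is a covering of $V\in\mathcal{B}$ by members $U^{\al}\in\mathcal{B}$, then $U^{\al}\subset V$ forces $U^{\al}_q\subset V_q$ and $V_q=\bigcup_\al U^{\al}_q$ is an open covering of $V_q$ in $\CC$ for every $q$.

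It remains to check the two sheaf-on-a-base axioms, and here the only point requiring care is that $\mathcal{B}$ is not closed under intersection, so one cannot form sections over $U^{\al}\cap U^{\be}$ directly. I would therefore use the base formulation, in which the compatibility of a family $(s^{\al})$ is tested on all members of $\mathcal{B}$ contained in the pairwise intersections. Since $\mathcal{B}$ is a base, such members cover $U^{\al}\cap U^{\be}$ and hence, coordinatewise, cover $U^{\al}_q\cap U^{\be}_q$; as two holomorphic functions that agree on the pieces of an open cover agree, the hypothesis gives $s^{\al}_q|_{U^{\al}_q\cap U^{\be}_q}=s^{\be}_q|_{U^{\al}_q\cap U^{\be}_q}$ for all $q$. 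Because $\cO$ is a sheaf on $\CC$ and $V_q=\bigcup_\al U^{\al}_q$, for each $q$ the family $(s^{\al}_q)_\al$ glues to a unique $f_q\in\cO(V_q)$, and then $(f_q)_q\in\prod_q\cO(V_q)=\mathfrak{F}_{\fa\ff_1}(V)$ is the unique element restricting to every $s^{\al}$, the separation axiom being likewise coordinatewise. Since the restriction maps are continuous algebra homomorphisms and, by the convention recalled before Theorem~\ref{af1PGSh}, a sheaf in our subcategory of Fr\'echet algebras is one that becomes a sheaf of sets under the forgetting functor, this coordinatewise gluing is exactly the gluing required of $\mathfrak{F}_{\fa\ff_1}(-)$. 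The main obstacle is thus essentially bookkeeping: translating coverings in the non-Hausdorff base $\mathcal{B}$ into genuine open coverings in each coordinate and invoking the base version of the axioms in place of the full ones.
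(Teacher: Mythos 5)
Your proposal is correct and follows essentially the same route as the paper: the paper's own proof simply says it is analogous to Theorem~\ref{af1PGSh}, i.e., one observes the presheaf structure (via the density lemma and Theorem~\ref{FgV}) and reduces the gluing axiom coordinatewise to the sheaf axioms for $\cO$ on open subsets of $\CC$. Your extra care with the base formulation (testing compatibility on members of $\mathcal{B}$ contained in the intersections, since $\mathcal{B}$ is not closed under intersection) is a detail the paper leaves implicit but is exactly the right bookkeeping.
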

The proof is similar to that of Theorem~\ref{af1PGSh}  with the obvious replacement of the sheaf of smooth
functions by the sheaf of holomorphic functions and of the family of all open sets by the topology base.

\begin{thm}\label{Faf1Vgen}
For each open subset
$V$ of $\Om_\CC$, there exists a multiplication with respect to which
 $\mathfrak{F}_{\fa\ff_1}(V)$  is a locally solvable Fr\'echet--Arens--Michael algebra. In the case where the polynomial
algebra is dense in $\cO(V)$,  this multiplication is a continuous extension of that in $U(\fa\ff_1)$.
Moreover, the correspondences in Theorem~\ref{nilShD}
determine the sheaf $\mathfrak{F}_{\fa\ff_1}(-)$  of locally solvable
Fr\'echet--Arens--Michael $\CC$-algebras on~$\Om_\CC$.
\end{thm}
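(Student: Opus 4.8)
The plan is to handle the three assertions in turn, the crux being that for a \emph{general} open $V$ the polynomial algebra need no longer be dense in each $\cO(V_q)$, so the multiplication can no longer be obtained, as in Theorems~\ref{af1CgU} and~\ref{FgV}, by continuous extension from $U(\fa\ff_1)$. I would therefore define it intrinsically. Writing elements of $\mathfrak{F}_{\fa\ff_1}(V)=\prod_q\cO(V_q)$ as tuples $(f_q)$ and using the commutation rule $e_2^{\,j}g(e_1)=g(e_1-j)e_2^{\,j}$ coming from $[e_1,e_2]=e_2$, the product in $U(\fa\ff_1)$ reads $(f\cdot g)_m(\la)=\sum_{j+k=m}f_j(\la)\,g_k(\la-j)$, and I take this formula as the definition on all of $\mathfrak{F}_{\fa\ff_1}(V)$. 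It is well defined: for $\la\in V_m$ one has $V_m\subset V_j$ (so $f_j(\la)$ makes sense), and, iterating $V_{q+1}\subset V_q+1$ from~\eqref{Ppp1Up}, one gets $V_m-j\subset V_k$ with $m=j+k$ (so $g_k(\la-j)$ makes sense); each summand is holomorphic on $V_m$, whence $(f\cdot g)_m\in\cO(V_m)$. Continuity of this bilinear map on the product of $\cO$-spaces is immediate, as it is built from pointwise multiplication and the continuous translation operators on holomorphic functions.

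For the algebra-theoretic properties I would reuse the embedding of Theorem~\ref{af1CgU}. The matrix formula~\eqref{tildfull2d} makes sense for arbitrary holomorphic $f_j$, so $\wt\pi_q$, and hence
$$
\rho_V\colon \mathfrak{F}_{\fa\ff_1}(V)\to\prod_{q=0}^{\infty}\rT_{q+1}(\cO(\underline{W}_{(q+1)})),\qquad \rho_V(f)=(\wt\pi_q(f)),
$$
extend to the whole of $\mathfrak{F}_{\fa\ff_1}(V)$. A direct check shows that $\rho_V$ is a homomorphism for the convolution product above (this is exactly the bookkeeping producing $(f\cdot g)_m=\sum_{j+k=m}f_j(\cdot)\,g_k(\cdot-j)$), and the target is a locally solvable Fr\'echet--Arens--Michael algebra by Theorem~\ref{corTseqho} together with Proposition~\ref{stlosol}. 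The continuity and topological injectivity of $\rho_V$ are proved verbatim as in Theorem~\ref{af1CgU}: all the estimates there involve only uniform norms, as already noted in the proof of Theorem~\ref{FgV}, and density of $U(\fa\ff_1)$ was used there \emph{only} to identify $\mathfrak{F}_{\fa\ff_1}(V)$ with the completion of $U(\fa\ff_1)$, which we now bypass. Here lies the one genuinely new point, and the main obstacle: since $\mathfrak{F}_{\fa\ff_1}(V)$ is complete, being a countable product of Fr\'echet spaces, and $\rho_V$ is a topological embedding, its image is complete, hence closed, in the Hausdorff target; as a closed subalgebra of a locally solvable Arens--Michael algebra it is itself such, and therefore so is $\mathfrak{F}_{\fa\ff_1}(V)$ by Proposition~\ref{stlosol}. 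This proves the first assertion. The second is then formal: when the polynomials are dense in every $\cO(V_q)$, the image of $U(\fa\ff_1)$ is dense in $\mathfrak{F}_{\fa\ff_1}(V)$ and the convolution product agrees with the product of $U(\fa\ff_1)$ on this dense subalgebra, so it is its unique continuous extension, recovering Theorem~\ref{FgV}.

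It remains to upgrade the sheaf of Theorem~\ref{nilShD} from the base $\mathcal{B}$ to all of $\Om_\CC$. The restriction maps $\tau_{VW}$ are, componentwise, the ordinary restrictions $\cO(V_q)\to\cO(W_q)$; since the translations and pointwise products defining the convolution commute with restriction of holomorphic functions, each $\tau_{VW}$ is a continuous algebra homomorphism, so we obtain a presheaf of locally solvable Fr\'echet--Arens--Michael algebras on $\Om_\CC$ restricting to the one on $\mathcal{B}$. For the gluing axiom I would argue exactly as in the real case (Theorem~\ref{af1PGSh}): an open cover of $V$ amounts, for every $q$, to an open cover of $V_q\subset\CC$, so a compatible family of sections is, in each component $q$, a compatible family for the sheaf $\cO$; the unique holomorphic gluing in each $\cO(V_q)$ assembles to a unique element of $\prod_q\cO(V_q)=\mathfrak{F}_{\fa\ff_1}(V)$, and since multiplication is componentwise and local this tuple is automatically a section over $V$ compatible with the algebra structure. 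Thus the gluing axiom for $\mathfrak{F}_{\fa\ff_1}(-)$ reduces to that for $\cO$ on $\CC$, and we obtain the desired sheaf on $\Om_\CC$ extending the one of Theorem~\ref{nilShD}. In summary, once the product is defined by the explicit convolution and the closedness of $\Image\rho_V$ is secured via completeness, everything else parallels the already established real-case and base-case arguments.
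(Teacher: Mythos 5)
Your proof is correct, but it follows a genuinely different route from the paper's. The paper disposes of the general case abstractly: arguing as in Theorem~6.6 of \cite{ArOld}, it extends the sheaf already constructed on the base $\mathcal{B}$ (Theorem~\ref{nilShD}) to all open sets, so that for a general $V$ the algebra $\mathfrak{F}_{\fa\ff_1}(V)$ acquires its multiplication as the inverse limit of the algebras $\mathfrak{F}_{\fa\ff_1}(W)$, $W\in\mathcal{B}$, $W\subset V$ (identified with $\prod_q\cO(V_q)$ via the sheaf property of $\cO$), and local solvability again comes from Proposition~\ref{stlosol} applied to that closed subalgebra of a product. You instead bypass the sheaf-extension machinery by writing the product down explicitly as the twisted convolution $(f\cdot g)_m(\la)=\sum_{j+k=m}f_j(\la)g_k(\la-j)$, checking well-definedness from \eqref{Ppp1Up} (correctly: $V_m\subset V_j$ and $V_m-j\subset V_{m-j}$ by iteration), and then re-running the embedding $\rho_V$ into $\prod_q\rT_{q+1}(\cO(\underline{W}_{(q+1)}))$ on all of $\mathfrak{F}_{\fa\ff_1}(V)$ rather than only on $U(\fa\ff_1)$; the key observation that completeness of the Fr\'echet space $\prod_q\cO(V_q)$ forces the image of the topological embedding $\rho_V$ to be closed, so that Proposition~\ref{stlosol} applies directly, is exactly the right replacement for the density argument. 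Your route buys an explicit formula for the product (which also makes the compatibility with restrictions and the agreement with Theorem~\ref{FgV} on $\mathcal{B}$ transparent) and is self-contained; the paper's route is shorter on the page because it delegates the work to the general extension-from-a-base argument already carried out in \cite{ArOld}. The only point worth making explicit in your write-up is associativity of the convolution product, but this is immediate from the injectivity of the homomorphism $\rho_V$ into an associative algebra, which you have.
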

\begin{proof}
Arguing as in the proof of Theorem~6.6 in~\cite{ArOld}, we show that $\mathfrak{F}_{\fa\ff_1}(-)$ is an extension of the sheaf on
the base $\mathcal{B}$  constructed in Theorem~\ref{nilShD}. In particular, it turns out to be a sheaf of Fr\'echet--Arens--Michael
algebras. The local solvability follows from Proposition~\ref{stlosol}.
\end{proof}

\end{document}